\newtheorem{theorem}{Theorem}
\newtheorem{prop}{Proposition}
\newtheorem{lemma}{Lemma}
\newtheorem{rem}{Remark}
\newtheorem{exmp}{Example}
\begin{document}

\title{Chow's theorem for linear codes}
\author{Mariusz Kwiatkowski, Mark Pankov}
\subjclass[2000]{51E22, 94B27}
\keywords{linear code, Grassmann graph, Chow's theorem}
\address{Department of Mathematics and Computer Science, 
University of Warmia and Mazury, S{\l}oneczna 54, Olsztyn, Poland}
\email{mkw@matman.uwm.edu.pl, pankov@matman.uwm.edu.pl}

\maketitle

\begin{abstract}
Let $\Gamma_{k}(V)$ be the Grassmann graph formed by $k$-dimensional subspaces of an $n$-dimensional vector space over
the finite field ${\mathbb F}_{q}$ consisting of $q$ elements and $1<k<n-1$.
Denote by $\Gamma(n,k)_q$ the restriction of the Grassmann graph to the set of all non-degenerate linear $[n,k]_q$ codes.
We describe maximal cliques of the graph $\Gamma(n,k)_q$ and show that every automorphism of this graph is induced by 
a monomial semilinear automorphism of $V$.
\end{abstract}

\section{Introduction}
Let us consider the Grassmann graph $\Gamma_{k}(V)$ formed by $k$-dimensional subspaces of an $n$-dimensional vector space $V$
over a field (not necessarily finite).
The classical Chow theorem \cite{Chow} (see also \cite{BCN,Die,P1,P2,Wan}) states that 
every automorphism of this graph is induced by a semilinear automorphism of $V$ if $1<k<n-1$ an $n\ne 2k$.
In the case when $n=2k\ge 4$, there are also the automorphisms of $\Gamma_{k}(V)$ related to semilinear isomorphisms of $V$
to the dual vector space $V^{*}$. 
For $k=1,n-1$ any two distinct vertices of the Grassmann graph are adjacent and every bijective transformation of the vertex set 
is a graph automorphism.

Results of the same nature were obtained in the discipline known as Geometry of Matrices \cite{Wan}.
Also, there are analogues of Chow's theorem for Grassmannians corresponding to Tits buildings of classical types \cite{Die,P1}.
A generalization concerning isometric embeddings of Grassmann graphs can be found in \cite{P2}.

Now, we suppose that $V$ is a vector space over the finite field $\mathbb{F}_q$ consisting of $q$ elements and
consider the restriction of the Grassmann graph to the set of all non-degenerate linear $[n,k]_q$ codes
(by \cite{TVN}, it is natural to reduce the study of linear codes to the non-degenerate case only).
Following \cite{KP} we denote this restriction by $\Gamma(n,k)_{q}$.
Note that two distinct non-degenerate linear $[n,k]_q$ codes are adjacent vertices of this graph 
if they have the maximal number of the same codewords.

In \cite{KP} the path distance on the graph $\Gamma(n,k)_{q}$ is investigated. 
In the present paper, we show that every automorphism of $\Gamma(n,k)_{q}$, $1<k<n-1$ is induced by a monomial semilinear automorphism of $V$.
Recall that two linear codes are semilinearly equivalent if there is a monomial semilinear automorphism transferring one of them to the other.
So, our result says that two non-degenerate linear $[n,k]_q$ codes are semilinearly equivalent if and only if 
they are equivalent vertices of the graph $\Gamma(n,k)_{q}$, 
i.e. there is an automorphism of this graph transferring one of these vertices to the other.

The proof of Chow's theorem is based on the description of maximal cliques in the Grassmann graph.
There are precisely two types of  maximal cliques --- so-called stars and tops. 
A star consists of all $k$-dimensional subspaces containing a fixed $(k-1)$-dimensional subspace 
and a top is formed by all $k$-dimensional subspaces contained in a fixed $(k+1)$-dimensional subspace.
Since $\Gamma(n,k)_q$ is a subgraph of the Grassmann graph, every maximal clique of $\Gamma(n,k)_q$ is the intersection of this graph 
with a certain star or a top.
However, there are maximal cliques of $\Gamma_{k}(V)$ which do not intersect $\Gamma(n,k)_q$ or intersect in non-maximal cliques.
Also, maximal cliques of the same type in $\Gamma(n,k)_q$ (the corresponding maximal cliques of $\Gamma_{k}(V)$ are of the same type)
may be of different sizes. 
The detailed description of maximal cliques of $\Gamma(n,k)_q$ will be given in Section 4.
Using this description, we prove our main result (Theorem \ref{theorem}) in Section 5.

\section{Grassmann graph}
Let $V$ be an $n$-dimensional vector space over a field $F$ (not necessarily finite). 
Denote by ${\mathcal G}_{k}(V)$ the Grassmannian consisting of all $k$-dimensional subspaces of $V$.
The {\it Grassmann graph} $\Gamma_{k}(V)$ is the graph whose vertex set is ${\mathcal G}_{k}(V)$ and 
two $k$-dimensional subspaces are adjacent vertices of this graph if their intersection is $(k-1)$-dimensional. 
In the cases when $k=1,n-1$, any two distinct vertices of the Grassmann graph are adjacent. 
It is well-known that the Grassmann graph is connected.

A bijection $l:V\to V'$, where $V'$ is a vector space over a field $F'$,
is a {\it semilinear isomorphism} if 
$$l(x+y)=l(x)+l(y)$$
for all $x,y\in V$ and there is a field isomorphism $\sigma:F \to F'$ such that 
$$l(ax)=\sigma(a)l(x)$$
for every $a\in F$ and $x\in V$.
Every semilinear automorphism of $V$ induces an automorphism of the graph $\Gamma_{k}(V)$.
In the case when $n=2k$, the Grassmann graphs $\Gamma_{k}(V)$ and $\Gamma_{k}(V^{*})$ are isomorphic by duality 
and every semilinear isomorphism of $V$ to the dual vector space $V^{*}$ induces an automorphism of $\Gamma_{k}(V)$.

\begin{theorem}[W.L. Chow \cite{Chow}]
If $1<k<n-1$, then every automorphism of the Grassmann graph $\Gamma_{k}(V)$ is induced by a semilinear automorphism of $V$
or a semilinear isomorphism of $V$ to $V^{*}$
and the second possibility is realized only in the case when $n=2k$.
\end{theorem}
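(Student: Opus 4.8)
The plan is to follow the classical strategy via maximal cliques. Since $f$ is a graph automorphism, it permutes the maximal cliques of $\Gamma_k(V)$, and by the star/top classification recalled above each clique is either a star $S_A$ (all $k$-subspaces containing a fixed $(k-1)$-subspace $A$) or a top $T_B$ (all $k$-subspaces contained in a fixed $(k+1)$-subspace $B$). First I would record the intersection pattern: two distinct stars, and likewise two distinct tops, share at most one vertex, whereas an incident star--top pair $S_A, T_B$ with $A \subset B$ meets in the pencil $\{U : A \subset U \subset B\}$, a set with more than one element. Thus two maximal cliques meeting in more than one vertex are automatically of different types, and $f$ carries such pencils (``lines'') to pencils.

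The first real step is to show that $f$ either preserves the types of all maximal cliques (star to star, top to top) or reverses all of them (star to top, top to star). To each vertex $U$ attach the bundle of all stars containing $U$, that is, $\{S_A : A \subset U,\ \dim A = k-1\}$; these cliques pairwise meet only in $U$. Their images under $f$ are cliques through $f(U)$ that pairwise meet in the single vertex $f(U)$. If such an image bundle contained both a star $S_C$ and a top $T_D$, then $C \subset f(U) \subset D$ would force $C \subset D$, so $S_C$ and $T_D$ would meet in a pencil rather than in a single vertex --- a contradiction. Hence every star-bundle is sent entirely to stars or entirely to tops, which assigns a well-defined type to each vertex $U$. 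Two adjacent vertices lie in a common star, so they receive the same type; since $\Gamma_k(V)$ is connected, the type is globally constant, and this is exactly the desired dichotomy. That tops then behave consistently follows because a top meeting a star in a pencil cannot map to a star, two distinct stars meeting in at most one vertex.

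In the type-preserving case $f$ induces adjacency-preserving bijections $f_{k-1}$ of $\mathcal{G}_{k-1}(V)$ and $f_{k+1}$ of $\mathcal{G}_{k+1}(V)$ via $S_A \mapsto S_{f_{k-1}(A)}$ and $T_B \mapsto T_{f_{k+1}(B)}$, and the incidence $A \subset B \Leftrightarrow f_{k-1}(A) \subset f_{k+1}(B)$ is preserved. Propagating this data through the subspace lattice yields a collineation of the projective space $P(V)$; the fundamental theorem of projective geometry then provides a semilinear automorphism $l$ of $V$ inducing that collineation, and a direct verification shows that $l$ induces $f$ on $\mathcal{G}_k(V)$. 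For the type-reversing case, note that the restriction of $f$ to any clique sends the pencils inside it to pencils, hence is an isomorphism of projective spaces; since a star carries the structure of a projective space of dimension $n-k$ and a top of dimension $k$, a star can map to a top only when $n-k = k$, i.e. $n = 2k$. In that situation I would compose $f$ with the annihilator isomorphism $\Gamma_k(V) \to \Gamma_k(V^{*})$ (which itself reverses types, since it sends $(k-1)$-subspaces of $V$ to $(k+1)$-subspaces of $V^{*}$) to obtain a type-preserving isomorphism, apply the previous paragraph in its version for two different spaces, and thereby recover a semilinear isomorphism $V \to V^{*}$ inducing $f$.

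The main obstacle I anticipate is twofold. The delicate combinatorial point is the global type dichotomy, where the local bundle argument must be upgraded to a global statement using connectivity --- one must verify that the type is genuinely well defined at each vertex before propagating it along adjacencies. The heavier part is the reconstruction in the type-preserving case: extracting an actual collineation of $P(V)$ from the induced maps on $\mathcal{G}_{k-1}(V)$ and $\mathcal{G}_{k+1}(V)$ and then confirming, through the fundamental theorem of projective geometry, that the resulting semilinear map induces the original $f$ rather than merely agreeing with it on the level of cliques.
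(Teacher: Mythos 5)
Your proposal is correct, but note that the paper does not prove this statement: it is quoted as the classical theorem of Chow, with only the remark that the proof rests on the classification of maximal cliques into stars and tops (citing \cite{BCN,Die,P1,Wan}). Your sketch is exactly that standard clique-based argument (type dichotomy via star-bundles and connectivity, reduction to the Fundamental Theorem of Projective Geometry, and the dimension count $n-k=k$ for the type-reversing case), and it is the same machinery the paper itself adapts in Section 5 to prove its main result for $\Gamma(n,k)_q$.
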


For $k=1,n-1$ the above statement fails.
In these cases, every bijective transformation of ${\mathcal G}_{k}(V)$ is an automorphism of $\Gamma_{k}(V)$.

The proof of Chow's theorem is based on the description of maximal cliques in the Grassmann graph,
see, for example, \cite{BCN,Die,P1,Wan}.

Let $S$ and $U$ be incident subspaces of $V$ such that $\dim S<k<\dim U$.
We write $[S,U]_{k}$ for the set of all $k$-dimensional subspaces $X$ satisfying $S\subset X\subset U$.
In the cases when $S=0$ and $U=V$, this set will be denoted by $\langle U]_{k}$ or $[S\rangle_{k}$, respectively.
Also, this set is called a {\it line} if 
$$\dim S=k-1\;\mbox{ and }\;\dim U=k+1.$$
Note that the lines of the projective space $\Pi_{V}$ (the projective space associated to $V$) 
are subsets of type $\langle U]_{1}$, where $U$ is a $2$-dimensional subspace.
By the Fundamental Theorem of Projective Geometry \cite{Artin},
every bijective transformation of ${\mathcal G}_{1}(V)$
sending lines to subsets of lines is induced by a semilinear automorphism of $V$.

There are precisely the following two types of maximal cliques in $\Gamma_{k}(V)$:
\begin{enumerate}
\item[$\bullet$] the {\it stars} $[S\rangle_{k}$, $S\in {\mathcal G}_{k-1}(V)$,
\item[$\bullet$] the {\it tops} $\langle U]_{k}$, $U\in {\mathcal G}_{k+1}(V)$.
\end{enumerate}
The intersection of two distinct stars is empty or it contains precisely one element;
the second possibility is realized only in the case when the corresponding $(k-1)$-dimensional subspaces are adjacent vertices of $\Gamma_{k-1}(V)$.
Similarly, the intersection of two distinct tops is empty or it contains precisely one element
and the second possibility is realized only if the associated $(k+1)$-dimensional subspaces are adjacent vertices of $\Gamma_{k+1}(V)$.
The intersection of a star and a top is empty or a line; in the second case,
the corresponding $(k-1)$-dimensional and $(k+1)$-dimensional subspaces are incident.

\section{Graph of non-degenerate linear codes}
Let $\mathbb{F}_{q}$ be the finite field consisting of $q$ elements.
Consider the $n$-dimensional vector space 
$$V=\underbrace{{\mathbb F}_{q}\times\dots \times{\mathbb F}_{q}}_{n}$$
over this field.
The standard base of $V$ is formed by the vectors 
$$e_{1}=(1,0,\dots,0),\dots,e_{n}=(0,\dots,0,1).$$
Denote by $C_{i}$ the kernel of the $i$-th coordinate functional
$(x_{1},\dots,x_{n})\to x_{i}$.

A {\it linear} $[n,k]_{q}$ {\it code} is a $k$-dimensional subspaces of $V$.
Following \cite{TVN} we consider non-degenerate linear codes only.
A linear $[n,k]_{q}$ code $C\subset V$ is {\it non-degenerate} if
the restriction of every coordinate functional to $C$ is non-zero, in other words, $C$ is not contained in any coordinate hyperplane $C_{i}$.

All non-degenerate linear $[n,k]_{q}$ codes form the set 
$${\mathcal C}(n,k)_{q}=
{\mathcal G}_{k}(V)\setminus \left(\bigcup_{i=1}^{n}{\mathcal G}_{k}(C_{i}) \right).$$
Recall that the number of $k$-dimensional subspaces of $V$ is equal to the Gaussian coefficient
$$\genfrac{\lbrack}{\rbrack}{0pt}{}{n}{k}_q=
\frac{(q^n-1)(q^{n-1}-1)\cdots(q^{n-k+1}-1)}{(q-1)(q^2-1)\cdots(q^k-1)};$$
in particular, the number of $1$-dimensional subspaces 
and the number of hyperplanes in $V$ are equal to
$$\genfrac{[}{]}{0pt}{}{n}{1}_q=\genfrac{\lbrack}{\rbrack}{0pt}{}{n}{n-1}_q=
[n]_{q}=\frac{q^n-1}{q-1}.$$
Using the inclusion-exclusion principle, we can show that the set ${\mathcal C}(n,k)_{q}$ contains precisely
$$\sum_{i=0}^{n-k}(-1)^i\binom{n}{i}\genfrac{\lbrack}{\rbrack}{0pt}{}{n-i}{k}_q$$
elements.

Suppose that $1<k<n-1$ and denote by $\Gamma(n,k)_{q}$ the restriction of the Grassmann graph $\Gamma_{k}(V)$ 
to the set ${\mathcal C}(n,k)_{q}$.
The graph $\Gamma(n,k)_{q}$  is connected \cite[Proposition 1]{KP}.

A semilinear automorphism $l$ of $V$ is called {\it monomial} if $l(e_{i})=e_{\delta(i)}$,
where $\delta$ is a permutation on the set $[n]=\{1,\dots,n\}$. 
Two linear $[n,k]_{q}$ codes are {\it semilinearly equivalent} if there is a monomial semilinear  automorphism of $V$
transferring one of them to the other.
Every monomial semilinear automorphism of $V$ induces an automorphism of the graph $\Gamma(n,k)_q$. 

Our main result is the following analogue of Chow's theorem.

\begin{theorem}\label{theorem}
If $1<k<n-1$, then every automorphism of the graph $\Gamma(n,k)_{q}$ is induced by a monomial semilinear automorphism of $V$. 
\end{theorem}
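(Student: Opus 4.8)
The plan is to follow the strategy of the classical Chow theorem, but adapted to the restricted graph $\Gamma(n,k)_q$. The classical argument proceeds in two stages: first one shows that any graph automorphism sends stars to stars and tops to tops (or interchanges the two types, in the $n=2k$ case); then one uses this to build a collineation of the underlying projective space and invokes the Fundamental Theorem of Projective Geometry. For our graph the same skeleton applies, but every step must be controlled using the detailed description of maximal cliques promised in Section 4. So I would begin by assuming that description is in hand: each maximal clique of $\Gamma(n,k)_q$ is the intersection of a star $[S\rangle_k$ or a top $\langle U]_k$ of the full Grassmann graph with the set ${\mathcal C}(n,k)_q$, and one knows precisely which such intersections remain maximal cliques and what their sizes are.

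The first key step is to prove that any automorphism $f$ of $\Gamma(n,k)_q$ preserves the partition of maximal cliques into the star-type and top-type classes. In the classical setting the two types are distinguished by the intersection pattern: two cliques of the same type meet in at most one vertex, while a star and a top meet in a line (three or more collinear vertices) or not at all. The difficulty is that in $\Gamma(n,k)_q$ the cliques have variable sizes and some are truncated, so a naive cardinality argument will not separate the types. Instead I would isolate a combinatorial invariant that survives the restriction --- for instance, the way a pair of intersecting maximal cliques sits inside the graph, measured through common neighbours or through the size of their intersection relative to a line. Since $f$ permutes maximal cliques and preserves all such incidence data, it must respect the two families; in the exceptional range $n=2k$ one has to check that no top-to-star swap can occur here, because the dual code of a non-degenerate code need not be non-degenerate, which should rule out the duality automorphisms that appear in the unrestricted theorem.

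Once stars are known to go to stars, I would pass to the induced action on the relevant Grassmannians. Mapping each star $[S\rangle_k$ to the $(k-1)$-subspace $S$ and each top $\langle U]_k$ to the $(k+1)$-subspace $U$ gives bijections on the sets of those subspaces that actually support a maximal clique of $\Gamma(n,k)_q$, and these bijections preserve adjacency (two stars share a vertex exactly when their $(k-1)$-spaces are adjacent in $\Gamma_{k-1}(V)$, and similarly for tops). The aim is to descend all the way to a collineation of the projective space $\Pi_V$ --- equivalently, a line-preserving bijection of ${\mathcal G}_1(V)$ --- so that the Fundamental Theorem of Projective Geometry produces a semilinear automorphism $l$ of $V$ inducing $f$. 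The main obstacle is precisely that ${\mathcal C}(n,k)_q$ omits the degenerate codes, so some points, lines, stars and tops are missing from the picture; I must verify that enough structure remains to reconstruct the full projective geometry and that the reconstructed collineation is forced to be \emph{monomial}. This last point is where the coordinate hyperplanes $C_i$ enter: the non-degeneracy condition is invariant under $f$, so the distinguished family $\{C_1,\dots,C_n\}$ of coordinate hyperplanes (characterised intrinsically as the hyperplanes whose $k$-subspaces are exactly the codes excluded from the graph) must be permuted among themselves by $l$. A semilinear automorphism permuting the coordinate hyperplanes is exactly a monomial one, so this yields the desired conclusion and pins down $l(e_i)=e_{\delta(i)}$ for a permutation $\delta$ of $[n]$.
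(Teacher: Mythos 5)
Your skeleton matches the paper's at a high level (maximal cliques, separating stars from tops, descending to a map on points, Fundamental Theorem of Projective Geometry, then monomiality from the coordinate hyperplanes), but as written the proposal is a plan with the hardest steps left unresolved, and one of them genuinely breaks your strategy. First, a smaller point: you dismiss the cardinality argument for separating stars from tops, but that is exactly what works. A maximal star has $[n-k+1]_q$ elements while every top of ${\mathcal C}(n,k)_q$ has $[k+1]_q-l$ elements with $l\ge k+1$, and these counts are strictly separated in both regimes $2k\le n$ and $2k>n$; this is how the paper proves that maximal stars go to maximal stars, and the non-maximal stars are then handled by an intersection argument with maximal stars. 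Your proposed ``combinatorial invariant'' is never specified, so this step is not actually established in your write-up. Second, the descent: reducing from general $k$ to the point level requires iterating the induced map through $\Gamma(n,k-1)_q,\dots,\Gamma(n,2)_q$ and verifying at each stage that adjacency in ${\mathcal C}(n,i)_q$ is recoverable from common membership in elements of ${\mathcal C}(n,i+1)_q$; you gesture at this but do not carry it out. Likewise, even for $k=2$ and $q\ge 3$ you must show that the induced point map sends the lines coming from \emph{degenerate} $2$-dimensional subspaces into lines (the FTPG needs all lines, not just the ones visible as tops of the code graph); the paper does this with an explicit construction of auxiliary non-degenerate points $Q,Q'$ and a lemma controlling which coordinate hyperplanes contain $g(P)$. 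You name this obstacle but do not overcome it.

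The fatal gap is the case $q=2$. There, by the paper's Proposition~3, $[P\rangle^{c}_{2}$ is a star of ${\mathcal C}(n,2)_2$ only when $P=P_I=\langle e_{i_1}+\dots+e_{i_m}\rangle$ with $m\ge 3$; in particular none of the points $\langle e_i\rangle$ or $\langle e_i+e_j\rangle$ supports a star, so the induced map $g$ is defined only on a proper subset of ${\mathcal G}_1(V)$ that omits precisely the points you would need to ``reconstruct the full projective geometry'' and to read off the permutation $\delta$ of the basis. The Fundamental Theorem of Projective Geometry cannot be invoked, and the paper replaces it by a separate combinatorial argument: star sizes force $g(P_I)=P_{\sigma(I)}$ with $|I|$ preserved, one identifies $\sigma$ from the sets $[n]\setminus\{i\}$ and $[n]\setminus\{i,j\}$, and then one checks directly that composing $f$ with the resulting coordinate permutation fixes every vertex (with an extra ad hoc analysis for $n=4$, where some codes lie in only one star). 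Your proposal contains no mechanism for this case, so as it stands it proves the theorem at best for $q\ge 3$.
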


\begin{rem}{\rm
If $q$ is a prime number, then the automorphism group of the field ${\mathbb F}_{q}$ is trivial. 
Otherwise, this is the cyclic group generated by the Frobenius automorphism. 
}\end{rem}

\section{Maximal cliques}
In this section, we investigate maximal cliques of the graph $\Gamma(n,k)_{q}$.
We will always suppose that $1<k<n-1$.

Let $S\in {\mathcal G}_{k-1}(V)$ and $U\in {\mathcal G}_{k+1}(V)$.
Recall that the star $[S\rangle_{k}$ consists of all $k$-dimensional subspaces containing $S$
and the top $\langle U]_{k}$ is formed by all $k$-dimensional subspaces contained in $U$.
If $S\subset U$, then the intersection of the star and the top is the line $[S,U]_{k}$.
The intersections of the star $[S\rangle_{k}$ and the top $\langle U]_{k}$ with ${\mathcal C}(n,k)_q$ are denoted by 
$[S\rangle^{c}_{k}$ and $\langle U]^{c}_{k}$, respectively.

Stars and tops of ${\mathcal G}_{k}(V)$ contain precisely 
$$[n-k+1]_q\;\mbox{ and }\;[k+1]_q$$
elements, respectively. 
Every line of ${\mathcal G}_{k}(V)$ consists of $q+1$ elements.

Since $\Gamma(n,k)_{q}$ is a subgraph of $\Gamma_{k}(V)$, 
every clique of $\Gamma(n,k)_{q}$ is a  clique of $\Gamma_{k}(V)$.
So, every maximal clique of $\Gamma(n,k)_{q}$ is the intersection of ${\mathcal C}(n,k)_{q}$ with a star or a top.
However, the intersection of a maximal clique of $\Gamma_{k}(V)$ with ${\mathcal C}(n,k)_{q}$ 
may be empty or a non-maximal clique of $\Gamma(n,k)_{q}$.

For a $(k-1)$-dimensional subspace $S$ and a $(k+1)$-dimensional subspace $U$
we will say that  $[S\rangle^{c}_{k}$ is a {\it star} of ${\mathcal C}(n,k)_{q}$ 
or $\langle U]^{c}_{k}$ is a {\it top} of ${\mathcal C}(n,k)_{q}$ only in the case when this is a maximal clique of the graph $\Gamma(n,k)_{q}$.

\subsection{Stars}
If $S$ belongs to ${\mathcal C}(n,k-1)_{q}$, then the star $[S\rangle_{k}$ is contained in ${\mathcal C}(n,k)_{q}$, 
i.e. $[S\rangle_{k}=[S\rangle^{c}_{k}$ is a star of ${\mathcal C}(n,k)_{q}$. 
In what follows, every such star will be called {\it maximal}.
If $S$ does not belong to ${\mathcal C}(n,k-1)_{q}$, then $[S\rangle^{c}_{k}$ is a proper subset of $[S\rangle_{k}$.

Denote by $c(S)$ the number of coordinate hyperplanes containing a subspace $S$.
For a $(k-1)$-dimensional subspace this number is not greater than $n-k+1$. 

\begin{lemma}\label{lemma-star1}
If $S$ is a $(k-1)$-dimensional subspace of $V$ which does not belong to ${\mathcal C}(n,k-1)_q$, then 
$$|[S\rangle^{c}_{k}|=(q-1)^{c(S)-1}\cdot q^{n-k-c(S)+1}.$$
\end{lemma}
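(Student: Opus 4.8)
The plan is to reduce the count to a projective-geometry computation in the quotient space $V/S$. Set $W=V/S$, so $\dim W=n-k+1$, and let $\pi\colon V\to W$ be the quotient map. The star $[S\rangle_{k}$ is in natural bijection with the set of $1$-dimensional subspaces of $W$, a $k$-dimensional subspace $X\supset S$ corresponding to the point $\pi(X)=X/S$. Thus $|[S\rangle_{k}|=[n-k+1]_q$, and the task is to determine how many of these points correspond to non-degenerate codes.

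First I would pin down exactly which coordinate hyperplanes can contain a code $X$ from the star. Write $I=\{i:S\subseteq C_{i}\}$, so that $|I|=c(S)$; since $S$ does not belong to ${\mathcal C}(n,k-1)_q$ we have $c(S)\ge 1$. If $i\notin I$, then $S\not\subseteq C_{i}$, and because $S\subseteq X$ we automatically get $X\not\subseteq C_{i}$; such coordinate hyperplanes impose no condition. If $i\in I$, then the coordinate functional $x_{i}$ vanishes on $S$ and hence descends to a functional $\bar x_{i}$ on $W$; writing $X=S+\langle v\rangle$, one has $X\subseteq C_{i}$ if and only if $\bar x_{i}(\pi(v))=0$. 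Consequently $X$ is non-degenerate precisely when $\bar x_{i}(\pi(v))\ne 0$ for every $i\in I$, and since this condition is invariant under rescaling $v$, it is well defined on points of $W$.

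The next step is to observe that the $c(S)$ functionals $\{\bar x_{i}:i\in I\}$ are linearly independent on $W$. This is immediate: the canonical isomorphism of $W^{*}$ onto the annihilator $S^{0}\subseteq V^{*}$ carries each $\bar x_{i}$ to the coordinate functional $x_{i}$, and distinct coordinate functionals, being members of the dual basis of $V^{*}$, are linearly independent. I can therefore extend $\{\bar x_{i}:i\in I\}$ to a basis of $W^{*}$ and use the resulting coordinates on $W$; in these coordinates the non-degeneracy condition says exactly that the first $c(S)$ coordinates of a representative vector are all nonzero.

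Finally I would carry out the count. The number of vectors of $W$ whose first $c(S)$ coordinates are all nonzero is $(q-1)^{c(S)}q^{(n-k+1)-c(S)}$, and each of them is nonzero because $c(S)\ge 1$; moreover the $q-1$ nonzero scalar multiples along each such line satisfy the condition simultaneously. Dividing by $q-1$ yields $(q-1)^{c(S)-1}q^{\,n-k-c(S)+1}$ lines, which is the asserted value of $|[S\rangle^{c}_{k}|$. The computation itself is routine; the only points that require care are the reduction showing that coordinate hyperplanes not containing $S$ are irrelevant, and the linear independence of the descended functionals, which guarantees that none of the conditions $\bar x_{i}\ne 0$ is redundant.
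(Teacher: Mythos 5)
Your proof is correct and follows essentially the same route as the paper's: both count the vectors $v$ extending $S$ to a $k$-dimensional non-degenerate subspace, observe that the $c(S)$ vanishing conditions imposed by the coordinate hyperplanes containing $S$ are independent, and divide by $q-1$. You phrase this invariantly in the quotient $V/S$ via the descended functionals and the annihilator of $S$, whereas the paper fixes a reduced generator matrix $\left[\begin{smallmatrix} I_{k-1} & A\end{smallmatrix}\right]$ and reads the same conditions off the zero columns of $A$; the underlying computation is identical.
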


\begin{proof}
Consider a generator matrix $M$ for the subspace $S$, i.e. a matrix whose rows form a base of $S$.
Without loss of generality we can suppose that 
$M=\left[
\begin{array}{cc}
I_{k-1}& A\\
\end{array}
\right]$,
where $I_{k-1}$ is the identity $(k-1)$-matrix and the last $c(S)$ columns of the matrix $A$ are zero.
It must be pointed out that each of the remaining columns of $A$ is non-zero.
A generator matrix for any $k$-dimensional subspace containing $S$ can be obtained from $M$ by adding a certain row vector $v$
whose first $k-1$ coordinates are $0$ and $v$ is not a linear combination of the rows from $M$.
The corresponding $k$-dimensional subspace will be denoted by $X(v)$.
This is an element of ${\mathcal C}(n,k)_{q}$ only in the case when the last $c(S)$ coordinates of $v$ are distinct from $0$.
The remaining $n-(k-1)-c(S)$ coordinates may be arbitrary. 
So, there are precisely $$(q-1)^{c(S)}\cdot q^{n-k-c(S)+1}$$ possibilities for $v$. 
We have $X(v)=X(w)$ if and only if $w$ is a scalar multiple of $v$.
Therefore, there are precisely 
$$\frac{(q-1)^{c(S)}\cdot q^{n-k-c(S)+1}}{q-1}=(q-1)^{c(S)-1}\cdot q^{n-k-c(S)+1}$$
distinct element of ${\mathcal C}(n,k)_{q}$ containing $S$.
\end{proof}

\begin{prop}\label{prop-star1}
If $q\ge 3$, then $[S\rangle^{c}_{k}$ is a star of ${\mathcal C}(n,k)_q$ for every $(k-1)$-dimen\-sional subspace $S$.
\end{prop}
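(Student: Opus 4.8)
The plan is to establish maximality of the clique $[S\rangle^c_k$ by ruling out, for $q\ge 3$, any proper extension. First I note that $[S\rangle^c_k$ is automatically a clique of $\Gamma(n,k)_q$, being the intersection of ${\mathcal C}(n,k)_q$ with the star $[S\rangle_k$ (a clique of $\Gamma_k(V)$). The case $c(S)=0$ is immediate: then $[S\rangle^c_k=[S\rangle_k$ is a full star of $\Gamma_k(V)$ contained in ${\mathcal C}(n,k)_q$, hence a maximal clique of the subgraph. So I would assume $c(S)\ge 1$ and argue that $[S\rangle^c_k$ cannot be enlarged to a bigger clique of $\Gamma(n,k)_q$.

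Suppose, to get a contradiction, that some $Y\in{\mathcal C}(n,k)_q\setminus[S\rangle^c_k$ is adjacent to every vertex of $[S\rangle^c_k$. Then $[S\rangle^c_k\cup\{Y\}$ is a clique of $\Gamma_k(V)$, so it lies in a star or a top. If it lies in a star $[S'\rangle_k$, then, choosing two distinct $X_1,X_2\in[S\rangle^c_k$ (which exist, as I verify below), we have $S\subseteq X_1\cap X_2$ and $S'\subseteq X_1\cap X_2$; since $X_1\cap X_2$ is $(k-1)$-dimensional, $S=X_1\cap X_2=S'$, so $Y\in[S\rangle_k\cap{\mathcal C}(n,k)_q=[S\rangle^c_k$, a contradiction. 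Hence the only remaining way to extend is through a top $\langle U]_k$; but then $[S\rangle^c_k\subseteq[S\rangle_k\cap\langle U]_k=[S,U]_k$, forcing every vertex of $[S\rangle^c_k$ into the single line $[S,U]_k$, i.e.\ into the $2$-dimensional subspace $U/S$ of $V/S$.

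The heart of the argument is to show that this last inclusion is impossible when $q\ge 3$. Here I reuse the parametrization from the proof of Lemma \ref{lemma-star1}: after fixing the generator matrix $M=[\,I_{k-1}\ A\,]$, the elements of $[S\rangle^c_k$ correspond, up to scalars, to the vectors $v$ whose first $k-1$ coordinates vanish and whose $c(S)$ constrained coordinates (those indexing the zero columns of $A$) are nonzero, the remaining coordinates being free. Passing to $V/S$, I claim these vectors span the whole quotient: for a free coordinate $i$ one compares a valid $v$ with $v+e_i$, and for a constrained coordinate $i$ one replaces $v_i$ by a different nonzero scalar---possible precisely because $q\ge 3$ provides at least two nonzero field elements---so in either case a nonzero multiple of $e_i$, and hence $e_i$ itself, lies in the span. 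Thus the vectors parametrizing $[S\rangle^c_k$ span $V/S$, which has dimension $n-k+1\ge 3$ since $k<n-1$. In particular they cannot lie in any $2$-dimensional subspace, so $[S\rangle^c_k$ is contained in no line, contradicting the top case above; this also shows $|[S\rangle^c_k|\ge 3$, which justifies the earlier choice of $X_1\neq X_2$. Therefore no extension $Y$ exists and $[S\rangle^c_k$ is a maximal clique.

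I expect the spanning computation to be the main obstacle, and more precisely the point at which the hypothesis $q\ge 3$ becomes indispensable: for $q=2$ a constrained coordinate admits only the single nonzero value $1$ and cannot be varied, so the quotient need not be spanned and $[S\rangle^c_k$ can collapse into a line---which is exactly why the case $q=2$ has to be handled separately.
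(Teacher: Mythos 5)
Your proof is correct, and although it follows the same overall reduction as the paper's --- dispose of $c(S)=0$ at once, note that any proper extension of the clique would force $[S\rangle^{c}_{k}$ into a single line $[S,U]_{k}$, and then rule that out --- the way you exclude the line containment is genuinely different. The paper counts: by Lemma \ref{lemma-star1}, $|[S\rangle^{c}_{k}|=(q-1)^{c(S)-1}q^{n-k-c(S)+1}\ge (q-1)^{n-k}\ge (q-1)^{2}\ge q+1$ for $q\ge 3$, while the line $[S,U]_{k}$ contains the degenerate subspace $U\cap C_{i}$ (for any coordinate hyperplane $C_{i}\supseteq S$) and so carries at most $q$ non-degenerate codes --- contradiction. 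You instead show that the vectors $v$ parametrizing $[S\rangle^{c}_{k}$ span the entire $(n-k+1)$-dimensional complement of $S$, using $q\ge 3$ to vary each constrained coordinate between two distinct nonzero values; since $n-k+1\ge 3$, they cannot lie in the $2$-dimensional $U/S$. Your spanning argument is somewhat more informative: it isolates exactly where $q\ge 3$ enters, and run with $q=2$ it immediately predicts the threshold $c(S)\le n-k-1$ of Proposition \ref{prop-star2}; the paper's cardinality bound is shorter and is reused elsewhere (the same style of estimate appears in Proposition \ref{prop-top1} and Lemma \ref{lemma-clique1}). Two minor points: the bound $|[S\rangle^{c}_{k}|\ge 3$ that licenses choosing $X_{1}\ne X_{2}$ in your star case is only established inside your top-case discussion, so it should be stated up front --- the spanning computation it rests on is independent of either case, so nothing is actually circular; and your explicit treatment of the ``extension lies in a star'' alternative is what the paper compresses into the parenthetical remark that two distinct stars of ${\mathcal G}_{k}(V)$ meet in at most one element.
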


\begin{proof}
The statement is obvious if $S$ is an element of ${\mathcal C}(n,k-1)_{q}$.
Consider the case when $S$ does not belong to ${\mathcal C}(n,k-1)_{q}$.
Lemma \ref{lemma-star1} shows that
\begin{equation}\label{eq1}
|[S\rangle^{c}_{k}|\ge (q-1)^{n-k}\ge (q-1)^{2}\ge q+1.
\end{equation}
Suppose that $[S\rangle^{c}_{k}$ is not a star of ${\mathcal C}(n,k)_{q}$, i.e.
it is a non-maximal clique of $\Gamma(n,k)_q$. 
A maximal clique of $\Gamma(n,k)_{q}$ containing $[S\rangle^{c}_{k}$ is a top $\langle U]^{c}_{k}$
(since the intersection of two distinct stars of ${\mathcal G}_{k}(V)$ contains not greater than one element).
Therefore, $[S\rangle^{c}_{k}$ is a subset of the line $[S,U]_{k}$.
By our assumption, $S$ is contained in at least one coordinate hyperplane $C_{i}$.
The corresponding $k$-dimensional subspace $U\cap C_{i}$ belongs to the line $[S,U]_{k}$, but it is not an element of ${\mathcal C}(n,k)_q$.
Since every line of ${\mathcal G}_{k}(V)$ consists of $q+1$ elements, we have
$$|{\mathcal C}(n,k)_{q}\cap [S,U]_{k}|\le q.$$
The line $[S,U]_{k}$ contains $[S\rangle^{c}_{k}$ and the latter inequality contradicts \eqref{eq1}.
\end{proof}

\begin{prop}\label{prop-star2}
If $q=2$ and $S$ is a $(k-1)$-dimensional subspace, then $[S\rangle^{c}_{k}$ is a star of ${\mathcal C}(n,k)_{q}$
only in the case when $c(S)\le n-k-1$.
\end{prop}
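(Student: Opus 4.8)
The plan is to prove the contrapositive of the stated implication: if $c(S)\ge n-k$, then $[S\rangle^{c}_{k}$ is \emph{not} a maximal clique of $\Gamma(n,k)_{2}$, hence not a star of $\mathcal{C}(n,k)_{2}$. Since always $c(S)\le n-k+1$, only the two values $c(S)=n-k+1$ and $c(S)=n-k$ can occur, and in both Lemma \ref{lemma-star1} with $q=2$ gives a small set, namely $|[S\rangle^{c}_{k}|=2^{\,n-k-c(S)+1}$, equal to $1$ and $2$ respectively. I would treat these two values separately. For completeness I would also record the converse: if $c(S)\le n-k-1$, then $|[S\rangle^{c}_{k}|\ge 2^{2}>q+1=3$, so the argument of Proposition \ref{prop-star1} applies verbatim (a non-maximal star would lie on a line of only $q+1$ elements, while two distinct stars meet in at most one point), forcing $[S\rangle^{c}_{k}$ to be a star.

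When $c(S)=n-k+1$ the set $[S\rangle^{c}_{k}$ is a single vertex $X$. A one-element clique is maximal only if $X$ has no neighbour, but $\Gamma(n,k)_{2}$ is connected and has more than one vertex, so $X$ has a neighbour and $[S\rangle^{c}_{k}$ is not maximal.

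The core case is $c(S)=n-k$, where $[S\rangle^{c}_{k}=\{X,Y\}$ is a single edge; here it suffices to exhibit a clique of $\Gamma(n,k)_{2}$ strictly larger than $\{X,Y\}$. Following the normalisation in the proof of Lemma \ref{lemma-star1}, I would take a generator matrix $[\,I_{k-1}\mid A\,]$ of $S$ whose only nonzero column of $A$ is the one in position $k$ and whose zero columns are exactly those in positions $k+1,\dots,n$. Using the parametrisation $X(v)$ of Lemma \ref{lemma-star1}, the two admissible rows are $v=(0^{k-1},v_{k},1^{n-k})$ with $v_{k}\in\{0,1\}$; they differ by $e_{k}$, so $U:=X+Y=\langle e_{1},\dots,e_{k},\,e_{k+1}+\cdots+e_{n}\rangle$, which is non-degenerate. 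I would then analyse the top $\langle U]^{c}_{k}$. Writing $f_{1}=e_{1},\dots,f_{k}=e_{k}$, $f_{k+1}=e_{k+1}+\cdots+e_{n}$ for a basis of $U$, the restrictions of the coordinate functionals are $x_{j}|_{U}=f_{j}^{*}$ for $j\le k$ and $x_{j}|_{U}=f_{k+1}^{*}$ for every $j>k$, so there are exactly $k+1$ distinct coordinate functionals on $U$, hence exactly $k+1$ degenerate hyperplanes of $U$. As $U$ has $[k+1]_{2}=2^{k+1}-1$ hyperplanes in total, the number of elements of $\langle U]^{c}_{k}$ equals $2^{k+1}-(k+1)-1\ge 3$ for $k\ge 2$. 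Every subset of a top is a clique of $\Gamma(n,k)_{2}$ consisting of non-degenerate codes, so $\langle U]^{c}_{k}$ is a clique properly containing $\{X,Y\}$, and $[S\rangle^{c}_{k}$ is not maximal.

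The main obstacle is precisely the edge case $c(S)=n-k$: everything hinges on pinning down $U=X+Y$ explicitly and counting its non-degenerate hyperplanes. The delicate points are justifying that the normalisation of Lemma \ref{lemma-star1} (column permutations are monomial, hence preserve both $\Gamma(n,k)_{2}$ and non-degeneracy) leaves $|\langle U]^{c}_{k}|$ unchanged, and verifying that the coordinate functionals collapse to exactly $k+1$ classes on $U$, which yields the clean count $2^{k+1}-k-2$; the final inequality then needs only $k\ge 2$, which holds since $1<k$.
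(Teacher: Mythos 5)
Your argument is correct and follows essentially the same route as the paper: the same case split on $c(S)$, the same appeal to Lemma \ref{lemma-star1} and the Proposition \ref{prop-star1} argument for $c(S)\le n-k-1$, and in the critical case $c(S)=n-k$ the same subspace $U=X+Y$. The only difference is the final step there: the paper exhibits one explicit element of $\langle U]^{c}_{k}$ not containing $S$ (the span of $v,\,w+v_{1},\dots,w+v_{k-1}$), whereas you count $|\langle U]^{c}_{k}|=2^{k+1}-k-2\ge 3>2=|[S\rangle^{c}_{k}|$; both correctly show $[S\rangle^{c}_{k}$ is properly contained in a top and hence not maximal.
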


\begin{proof}
Suppose that $S$ does not belong to ${\mathcal C}(n,k-1)_{q}$ and $c(S) \le n-k-1$.
Lemma \ref{lemma-star1} implies that
$$
|[S\rangle^{c}_{k}|\ge q^2=4>3=q+1.
$$
As in the proof of Proposition \ref{prop-star1}, we establish that $[S\rangle^{c}_{k}$ is not contained in a line of ${\mathcal G}_{k}(V)$
which guarantees that $[S\rangle^{c}_{k}$ is a star of ${\mathcal C}(n,k)_{q}$.

Consider the case when $c(S)=n-k$. 
As in the proof of Lemma \ref{lemma-star1}, we suppose that a generator matrix of $S$ is 
$M=\left[
\begin{array}{cc}
I_{k-1}& A\\
\end{array}
\right]$
and the last $n-k$ columns of the matrix $A$ are zero.
We take the vectors $v$ and $w$ whose first $k-1$ coordinates are $0$ and whose last $n-k$ coordinates are $1$,
and also the $k$-coordinates of $v$ and $w$ are equal to $1$ and $0$, respectively.
Then $[S\rangle^{c}_{k}$ consists of $X(v)$ and $X(w)$.
Let $X$ be the $k$-dimensional subspace spanned by the vectors
$$v,w+v_{1},\dots,w+v_{k-1},$$
where $v_{1},\dots,v_{k-1}$ are the rows of the matrix $M$.
This is an element of ${\mathcal C}(n,k)_q$ contained in the $(k+1)$-dimensional subspace 
$$U:=X(v)+X(w).$$
On the other hand, $X$ does not contain $S$. 
Therefore, $[S\rangle^{c}_{k}$  is a proper subset of $\langle U]^{c}_{k}$.
This means that $[S\rangle^{c}_{k}$ is not a star of ${\mathcal C}(n,k)_q$.

If $c(S)=n-k+1$, then $[S\rangle^{c}_{k}$ contains only one element 
and cannot be a star of ${\mathcal C}(n,k)_q$.
\end{proof}

\subsection{Tops}
Let $U$ be a $(k+1)$-dimensional subspace of $V$.
If it does not belong to ${\mathcal C}(n,k+1)_q$, then $\langle U]^{c}_{k}$ is empty.
Suppose that $U\in {\mathcal C}(n,k+1)_q$. 
Then every $k$-dimensional subspace $U\cap C_{i}$ is an element of the top $\langle U]_{k}$ which is not contained in $\langle U]^{c}_{k}$.
The subspaces 
$$U\cap C_{1},\dots, U\cap C_{n}$$
need not to be mutually distinct, but this collection contains at least $k+1$ distinct elements.
The latter follows from the fact that $U^{*}$ is spanned by the restrictions of the coordinate functionals to $U$.
Since the top $\langle U]_{k}$ contains precisely $[k+1]_{q}$ elements, we get the following lemma.

\begin{lemma}\label{lemma-top1}
If $U\in {\mathcal C}(n,k+1)_q$, then 
$$\max\{0,[k+1]_{q}-n\}\le |\langle U]^{c}_{k}|\le [k+1]_{q}-k-1.$$
\end{lemma}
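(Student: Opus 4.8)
The plan is to reduce the statement to a single counting identity. First I would determine which members of the full top $\langle U]_{k}$ fail to be non-degenerate. A $k$-dimensional subspace $X\subset U$ satisfies $X\subset C_{i}$ if and only if $X\subset U\cap C_{i}$; since $U\in{\mathcal C}(n,k+1)_q$ we have $U\not\subset C_{i}$, so $\dim(U\cap C_{i})=k=\dim X$ and the inclusion forces $X=U\cap C_{i}$. Hence $X\in\langle U]_{k}$ lies in ${\mathcal C}(n,k)_q$ precisely when $X$ differs from every $U\cap C_{i}$, and the complement $\langle U]_{k}\setminus\langle U]^{c}_{k}$ is exactly the set of distinct subspaces among $U\cap C_{1},\dots,U\cap C_{n}$. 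Writing $d$ for the number of these distinct subspaces and using that the top has $[k+1]_{q}$ elements, I obtain
$$|\langle U]^{c}_{k}|=[k+1]_{q}-d.$$
The lemma is then equivalent to the two-sided bound $k+1\le d\le n$.

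The upper estimate $d\le n$ is immediate, as there are only $n$ coordinate hyperplanes; combined with the trivial inequality $|\langle U]^{c}_{k}|\ge 0$ this yields the claimed lower bound $\max\{0,[k+1]_{q}-n\}$. For the reverse inequality I would pass to the dual space $U^{*}$. Denoting the $i$-th coordinate functional by $\phi_{i}$, its restriction $\phi_{i}|_{U}$ is nonzero exactly because $U$ is non-degenerate, and $U\cap C_{i}=\ker(\phi_{i}|_{U})$. Two nonzero functionals on $U$ share a kernel if and only if they are proportional, so $d$ equals the number of distinct $1$-dimensional subspaces of $U^{*}$ spanned by $\phi_{1}|_{U},\dots,\phi_{n}|_{U}$.

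The main step is the bound $d\ge k+1$, which simultaneously supplies the upper estimate $|\langle U]^{c}_{k}|\le[k+1]_{q}-k-1$. Because the coordinate functionals span $V^{*}$ and restriction to $U$ is surjective, the functionals $\phi_{1}|_{U},\dots,\phi_{n}|_{U}$ span $U^{*}$, which is $(k+1)$-dimensional. A family of vectors lying on only $m$ distinct lines through the origin spans a subspace of dimension at most $m$; therefore a spanning family of $U^{*}$ must meet at least $k+1$ distinct lines, i.e. determine at least $k+1$ distinct kernels. This gives $d\ge k+1$ and hence $|\langle U]^{c}_{k}|\le[k+1]_{q}-k-1$.

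The only genuinely nontrivial point — the main obstacle — is this last linear-algebra observation that a spanning family of an $(k+1)$-dimensional space cannot be confined to fewer than $k+1$ distinct $1$-dimensional subspaces; the rest of the argument is bookkeeping organised around the identity $|\langle U]^{c}_{k}|=[k+1]_{q}-d$.
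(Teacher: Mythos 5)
Your proof is correct and follows essentially the same route as the paper: the paper also identifies the complement of $\langle U]^{c}_{k}$ in the top with the distinct subspaces $U\cap C_{i}$, bounds their number above by $n$, and below by $k+1$ via the fact that the restrictions of the coordinate functionals span $U^{*}$. Your write-up merely makes explicit the dictionary between distinct kernels and distinct lines in $U^{*}$, which the paper leaves implicit.
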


\begin{prop}\label{prop-top1}
If $2k>n$, then $\langle U]^{c}_{k}$ is a top of ${\mathcal C}(n,k)_q$ for every $U\in {\mathcal C}(n,k+1)_q$.
\end{prop}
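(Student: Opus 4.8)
The plan is to prove maximality by contradiction, relying on the description of maximal cliques recalled above: every maximal clique of $\Gamma(n,k)_{q}$ is the intersection of ${\mathcal C}(n,k)_{q}$ with a star or a top of ${\mathcal G}_{k}(V)$. Since $\langle U]^{c}_{k}$ is by construction the intersection of the top $\langle U]_{k}$ with ${\mathcal C}(n,k)_{q}$, it is automatically a clique of $\Gamma(n,k)_{q}$, and the only thing to establish is that it cannot be enlarged. So I would suppose that $\langle U]^{c}_{k}$ is a proper subset of a maximal clique $K$, where either $K=\langle U']^{c}_{k}$ for some $U'\in{\mathcal G}_{k+1}(V)$ distinct from $U$, or $K=[S\rangle^{c}_{k}$ for some $S\in{\mathcal G}_{k-1}(V)$.

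First I would dispose of the two possibilities for $K$ by intersection arguments in the ambient Grassmann graph. If $K=\langle U']^{c}_{k}$ with $U'\ne U$, then $\langle U]^{c}_{k}\subseteq\langle U]_{k}\cap\langle U']_{k}$, and since two distinct tops of ${\mathcal G}_{k}(V)$ share at most one element, this forces $|\langle U]^{c}_{k}|\le 1$. If instead $K=[S\rangle^{c}_{k}$, then $\langle U]^{c}_{k}\subseteq[S\rangle_{k}\cap\langle U]_{k}$; as $\langle U]^{c}_{k}$ is nonempty (see the next paragraph), this intersection of a star and a top must be the line $[S,U]_{k}$, in particular $S\subset U$, and a line has exactly $q+1$ elements, so $|\langle U]^{c}_{k}|\le q+1$. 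Hence, in either case, non-maximality of $\langle U]^{c}_{k}$ would imply $|\langle U]^{c}_{k}|\le q+1$.

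It then remains to contradict this by a counting estimate, and this is where the hypothesis $2k>n$ enters. From Lemma \ref{lemma-top1} and $U\in{\mathcal C}(n,k+1)_{q}$ we have $|\langle U]^{c}_{k}|\ge [k+1]_{q}-n$, so I would show $[k+1]_{q}-n>q+1$. Using $2k>n$, i.e. $n\le 2k-1$, it suffices to verify $[k+1]_{q}>2k+q$, equivalently $q^{2}+q^{3}+\dots+q^{k}>2k-1$, which holds comfortably for all $q\ge 2$ and $k\ge 2$; in particular the lower bound is positive, so $\langle U]^{c}_{k}$ is nonempty as used above. This yields $|\langle U]^{c}_{k}|>q+1$, contradicting the previous paragraph, and therefore $\langle U]^{c}_{k}$ is a top of ${\mathcal C}(n,k)_{q}$.

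The structural case analysis is routine given the clique description of Section 2, so I expect the only delicate point to be the counting step: one must check that the crude lower bound $[k+1]_{q}-n$ of Lemma \ref{lemma-top1} already exceeds the line-size $q+1$, which is exactly what $2k>n$ guarantees. Without this hypothesis the argument genuinely breaks down, since for $2k\le n$ the quantity $[k+1]_{q}-n$ need not be positive and $\langle U]^{c}_{k}$ can fail to be maximal; thus $2k>n$ is not merely a convenience but the essential input.
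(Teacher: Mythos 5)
Your proposal is correct and follows essentially the same route as the paper: both use Lemma \ref{lemma-top1} to bound $|\langle U]^{c}_{k}|$ below by $[k+1]_{q}-n$, show this exceeds $q+1$ using $2k>n$, and conclude that $\langle U]^{c}_{k}$ cannot sit inside a line (hence inside any other maximal clique). The only difference is that you spell out the star/top case analysis that the paper leaves implicit in the phrase ``which gives the claim.''
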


\begin{proof}
Since $2k>n$, we have
$$[k+1]_q-n-(q+1)\geq \frac{q^{k+1}-q^{2}}{q-1}-n=\frac{q^{2}(q^{k-1}-1)}{q-1}-n\geq $$
$$\geq q^{2}q^{k-2}-n=q^k-n\geq 2k-n>0.$$
So, the number of elements in $\langle U]^{c}_{k}$ is greater than 
the number of elements in a line of ${\mathcal G}_{k}(V)$, i.e.
$\langle U]^{c}_{k}$ is not contained in a line of ${\mathcal G}_{k}(V)$ which gives the claim.
\end{proof}

\begin{lemma}\label{lemma-top2}
Let $M$ be a generator matrix of $U\in {\mathcal C}(n,k+1)_q$, i.e. a matrix whose rows form a base for $U$.
Then $U\cap C_{i}$ coincides with $U\cap C_{j}$ if and only if the $i$-column of $M$ is a scalar multiple of the $j$-column.
\end{lemma}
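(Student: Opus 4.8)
The plan is to work directly with the generator matrix $M$ and translate the condition $U \cap C_i = U \cap C_j$ into a statement about the $i$-th and $j$-th columns of $M$. The key observation is that, for $x \in U$, membership in the coordinate hyperplane $C_i$ is precisely the vanishing of the $i$-th coordinate, which I can compute from $M$: if $x = \lambda M$ for a row vector $\lambda \in \mathbb{F}_q^{k+1}$ (here I identify $U$ with $\mathbb{F}_q^{k+1}$ via the rows of $M$), then the $i$-th coordinate of $x$ is $\lambda \cdot M^{(i)}$, where $M^{(i)}$ denotes the $i$-th column of $M$. Thus $U \cap C_i$ corresponds, under this identification, to the hyperplane $\{\lambda : \lambda \cdot M^{(i)} = 0\}$ of $\mathbb{F}_q^{k+1}$, i.e. to the orthogonal complement of the line spanned by the column $M^{(i)}$.

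First I would record that since $U \in {\mathcal C}(n,k+1)_q$ is non-degenerate, no coordinate functional restricts to zero on $U$, so every column $M^{(i)}$ is a \emph{nonzero} vector in $\mathbb{F}_q^{k+1}$. Hence each $U \cap C_i$ is a genuine $k$-dimensional subspace (a hyperplane in $U$), matching the earlier discussion. The equality $U \cap C_i = U \cap C_j$ then becomes, after transport through the identification, the equality of the two hyperplanes $\{\lambda \cdot M^{(i)} = 0\}$ and $\{\lambda \cdot M^{(j)} = 0\}$ in $\mathbb{F}_q^{k+1}$.

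The remaining step is the standard linear-algebra fact that two nonzero linear functionals on a vector space have the same kernel if and only if one is a nonzero scalar multiple of the other. Applying this to the functionals $\lambda \mapsto \lambda \cdot M^{(i)}$ and $\lambda \mapsto \lambda \cdot M^{(j)}$, their kernels coincide exactly when $M^{(i)}$ and $M^{(j)}$ are proportional, i.e. when the $i$-th column of $M$ is a scalar multiple of the $j$-th column. Combining the two directions gives the equivalence claimed in the lemma.

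I do not expect any genuine obstacle here; the content is essentially a dictionary between columns of $M$ and coordinate hyperplanes. The only point requiring a little care is fixing the identification of $U$ with $\mathbb{F}_q^{k+1}$ via $\lambda \mapsto \lambda M$ and checking that it is a bijection (which holds because the rows of $M$ form a basis of $U$), so that equalities of subspaces of $U$ correspond faithfully to equalities of subspaces of $\mathbb{F}_q^{k+1}$. Once this is set up, both implications follow immediately from the kernel criterion above.
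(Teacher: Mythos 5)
Your proof is correct, and it takes a noticeably different route from the paper's. You identify $U$ with $\mathbb{F}_q^{k+1}$ via $\lambda\mapsto\lambda M$, observe that $U\cap C_i$ pulls back to the kernel of the functional $\lambda\mapsto\lambda\cdot M^{(i)}$ represented by the $i$-th column, note that non-degeneracy of $U$ forces each column to be nonzero, and then invoke the standard fact that two nonzero functionals have the same kernel if and only if they are proportional. The paper instead argues by explicit basis construction: it picks the smallest index $r$ with $v_r\notin U\cap C_i$, builds a basis of $U\cap C_i$ of the form $v_1,\dots,v_{r-1},v_r+a_1v_{l_1},\dots,v_r+a_sv_{l_s},v_{m_1},\dots,v_{m_t}$, reads off the $i$-coordinates of the rows from this data, does the same for $C_j$, and compares. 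Both arguments rest on the same dictionary between columns of $M$ and hyperplanes of $U$, but yours makes the duality explicit and dispatches the equivalence in one line via the kernel criterion, whereas the paper's is a hands-on coordinate computation that never names that criterion. Your version is shorter and, in my view, more transparent; the paper's version has the minor side benefit that the explicit bases \eqref{eq2} and \eqref{eq3} it constructs are reused in the discussion following the lemma (the example with $C(w)$ refers back to ``the proof of Lemma \ref{lemma-top2}''). Your one point of care --- that $\lambda\mapsto\lambda M$ is a bijection onto $U$ because the rows form a basis, so subspace equalities transport faithfully --- is exactly the right thing to check, and it holds.
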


\begin{proof}
Let $v_{1},\dots,v_{k+1}$ be the rows of the matrix $M$.
Consider the $k$-dimen\-sional subspace $U\cap C_i$ and denote by $r$ the smallest number for which 
$v_r$ does not belong to $U\cap C_i$. 
Let $M$ and $L$ be the sets of all $p\in\{r+1,\dots, k+1\}$ such that $v_{p}$ belongs to $U\cap C_i$
or does not belong to $U\cap C_i$, respectively.
Suppose that 
$$M=\{m_{1},\dots,m_{t}\}\;\mbox{ and }\;L=\{l_{1},\dots,l_{s}\}.$$
For every $p\in \{1,\dots,s\}$ the subspace $\langle v_r, v_{l_p}\rangle$ is not contained in $U\cap C_i$, 
but there is the unique non-zero scalar $a_p$ such that $v_r+ a_pv_{l_p}$ belongs to $U\cap C_i$.
The vectors
\begin{equation}\label{eq2}
v_1, \dots, v_{r-1}, v_r+a_1v_{l_1}, \dots, v_r+a_sv_{l_s}, v_{m_1}, \dots, v_{m_t}
\end{equation}
form a base for $U\cap C_i$. 
Then the $i$-coordinates of vectors 
$$v_1, \dots, v_{r-1}, v_{m_1}, \dots, v_{m_t}$$
are $0$.  
Since $U$ is not contained in $C_{i}$, the $i$-coordinate of $v_{r}$ is a non-zero scalar $a$ 
and the $i$-coordinate of every $v_{l_p}$ is equal to $-aa^{-1}_{p}$.

Similarly, we establish that the subspace $U\cap C_j$ is spanned by 
\begin{equation}\label{eq3}
v_1, \dots, v_{r'-1}, v_{r'}+b_1v_{l'_1}, \dots, v_{r'}+b_sv_{l'_{s'}}, v_{m'_1}, \dots, v_{m'_{t'}}.
\end{equation}
The $i$-column and the $j$-column of $M$ are formed by the $i$-coordinates and $j$-coordi\-na\-tes of $v_{1},\dots,v_{k+1}$,
respectively. 
The vectors \eqref{eq2} and the vectors \eqref{eq3} span the same $k$-dimensional subspace
if and only if these columns are proportional.
\end{proof}

Using Lemma \ref{lemma-top2}, we establish the existence of $U\in {\mathcal C}(n,k+1)_q$ which does not contain elements of ${\mathcal C}(n,k)_q$.
Such elements of ${\mathcal C}(n,k+1)_q$ do not define tops of ${\mathcal C}(n,k)_q$.

\begin{exmp}{\rm
Consider the linear $[7,3]_2$ code $C$ whose generator matrix is
$$\left[
\begin{array}{ccccccc}
0&0&1&0&1&1&1\\
0&1&0&1&0&1&1\\
1&0&0&1&1&0&1\\
\end{array}
\right].$$
The columns of the matrix are mutually distinct and Lemma \ref{lemma-top2} guarantees 
that the coordinate hyperplanes intersect $C$ in $7$ mutually distinct $2$-dimensional subspaces.
On the other hand, $C$ contains precisely $[3]_{2}=7$ distinct $2$-dimensional subspaces.
Therefore, it does not contain non-degenerated linear $(7,2)_2$ codes.
Similarly, if $n=[k+1]_{q}$, then we can construct a matrix of size $(k+1)\times n$ over ${\mathbb F}_{q}$ whose columns are non-zero and mutually distinct.
Let $U$ be the associated element of ${\mathcal C}(n,k+1)_q$. 
It contains precisely $n=[k+1]_{q}$ distinct $k$-dimensional subspaces.
By Lemma \ref{lemma-top2}, every $k$-dimensional subspace of $U$ is contained in a certain $C_{i}$
and $U$ does not contain elements of ${\mathcal C}(n,k)_q$.
}\end{exmp}

Now, we show that $\langle U]^{c}_k$ can be a non-empty subset in a line of ${\mathcal G}_{k}(V)$.
In this case, it is not a top of ${\mathcal C}(n,k)_q$ again.

\begin{exmp}{\rm
As above, we suppose that $M$ is a generator matrix for $U$
and $v_{1},\dots,v_{k+1}$ are the rows of this matrix.
Consider the set $W$ formed by all non-zero vectors $w=(w_{1},\dots,w_{k+1})$ whose scalar multiples 
do not appear as columns in the matrix $M$.
For every such vector $w$ we denote by $C(w)$ the $k$-dimensional subspace of $U$ 
consisting of all vectors $\sum^{k+1}_{i=1}a_{i}v_{i}$, where scalars $a_{1},\dots, a_{k+1}$ satisfy the equality 
$$\sum^{k+1}_{i=1}a_{i}w_{i}=0.$$
Since $w$ is not a scalar multiple of a certain column from $M$,
the subspace $C(w)$ is not contained in any $C_{i}$ (see the proof of Lemma \ref{lemma-top2}).
Thus, $C(w)$ is an element of ${\mathcal C}(n,k)_q$
and $\langle U]^{c}_{k}$ is formed by all $C(w)$.
If $w'$ is a scalar multiple of $w$, then $C(w)=C(w')$.
Suppose that there exist linearly independent vectors
$$t_{i}=(t_{i1},\dots,t_{ik+1}),\;\;\;i=1,\dots,k-1$$
such that the equality
$$\sum^{k+1}_{j=1}t_{ij}w_{j}=0$$
holds for every $w=(w_{1},\dots,w_{k+1})\in W$ and every $i\in \{1,\dots,k-1\}$.
Then the $(k-1)$-dimensional subspace $S$ spanned by the vectors
$$\sum^{k+1}_{j=1}t_{ij}v_{j},\;\;\;i=1,\dots,k-1$$
is contained in every $C(w)$. This means that $\langle U]^{c}_{k}$ is contained in the line $[S,U]_k$.
If $[S\rangle^{c}_k$ is a star of ${\mathcal C}(n,k)_q$,
then $\langle U]^{c}_{k}$ is a proper subset of this star.
Consider, for example, the non-degenerate linear $[12,4]_2$  code $C$ whose generator matrix is
$$\left[
\begin{array}{cccccccccccccc}
0&0&0&1&0&1&0&1&0&1&1&1\\
0&0&1&0&1&0&1&0&1&0&1&1\\
0&1&0&0&0&0&1&1&1&1&0&1\\
1&0&0&0&1&1&0&0&1&1&1&0\\
\end{array}
\right].$$
The matrix does not contain the columns
$$\left[
\begin{array}{c}
0\\
0\\
1\\
1\\
\end{array}
\right],
\left[
\begin{array}{c}
1\\
1\\
0\\
0\\
\end{array}
\right],
\left[
\begin{array}{c}
1\\
1\\
1\\
1\\
\end{array}
\right]$$
which satisfy the linear equalities
$$w_1+w_2=0,\;\; w_3+w_4=0.$$
This means that every element of $\langle C]^{c}_{3}$ contains the $2$-dimensional subspace $L$
spanned by $v_1+v_2$ and $v_3+v_4$. 
Then $\langle C]^{c}_{3}$ is contained in the line $[L,C]_3$.
Since each of these subsets contains precisely $3$ elements, they are coincident.
}\end{exmp}

\section{Proof of Theorem \ref{theorem}}

\subsection{Two lemmas and reduction to the case $k=2$}
As in the previous section, we suppose that $1<k<n-1$.
Every automorphism of $\Gamma(n,k)_{q}$ transfers maximal cliques of $\Gamma(n,k)_{q}$ to maximal cliques.
Recall that maximal cliques of this graph are stars and tops of ${\mathcal C}(n,k)_q$.

\begin{lemma}\label{lemma-clique1}
Every automorphism of $\Gamma(n,k)_{q}$ transfers maximal stars of ${\mathcal C}(n,k)_q$ to maximal stars.
\end{lemma}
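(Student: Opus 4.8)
The plan is to recognize maximal stars purely through the way maximal cliques of $\Gamma(n,k)_q$ intersect one another, since any automorphism $\phi$ permutes the maximal cliques and preserves the cardinalities of their pairwise intersections. Recall from Section 2 that two distinct stars of $\mathcal G_k(V)$ meet in at most one point, two distinct tops meet in at most one point, while a star and a top meet either in a line of $q+1$ elements or not at all. As every star, resp.\ top, of $\mathcal C(n,k)_q$ is contained in a star, resp.\ top, of $\mathcal G_k(V)$, these bounds persist after restriction: two stars of $\mathcal C(n,k)_q$, and likewise two tops, share at most one vertex, whereas a maximal star $[S\rangle^c_k$ with $S$ non-degenerate meets a top $\langle U]^c_k$ with $S\subset U$ in the full line $[S,U]_k$, all $q+1$ of whose members contain the non-degenerate subspace $S$ and therefore lie in $\mathcal C(n,k)_q$. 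I would introduce the auxiliary graph $\mathcal M$ whose vertices are the maximal cliques of $\Gamma(n,k)_q$, two of them being joined when they share at least two vertices. By the above, $\mathcal M$ is bipartite with parts the stars and the tops of $\mathcal C(n,k)_q$, and $\phi$ acts on $\mathcal M$ as an automorphism.

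First I would check that $\mathcal M$ is connected. Any star $[S\rangle^c_k$ and any top $\langle U]^c_k$ sharing a vertex $X$ satisfy $S\subset X\subset U$, so they meet in the line $[S,U]_k$ and are adjacent in $\mathcal M$; hence all maximal cliques through a fixed vertex lie in a single component, and connectedness of $\Gamma(n,k)_q$ (stated in Section 3) propagates this to all of $\mathcal M$. A connected bipartite graph admits only automorphisms that preserve or interchange the two parts, so $\phi$ either sends stars to stars and tops to tops, or sends every star to a top and conversely.

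The crux is to exclude the second possibility, and here the ranges $n\ge 2k$ and $n<2k$ require different size estimates. For $n\ge 2k$ a maximal star has exactly $[n-k+1]_q$ vertices, while Lemma \ref{lemma-top1} bounds every top by $[k+1]_q-k-1<[k+1]_q\le[n-k+1]_q$; thus a maximal star is strictly larger than any top, and as $\phi$ preserves cardinalities it cannot carry a maximal star to a top. For $n<2k$ the tops may instead be the larger cliques, so I would argue at the opposite extreme: Lemma \ref{lemma-top1} forces every top to have at least $[k+1]_q-n$ vertices, whereas Lemma \ref{lemma-star1} produces much smaller stars, since for $q\ge 3$ a $(k-1)$-subspace lying in $n-k+1$ coordinate hyperplanes yields a star with only $(q-1)^{n-k}$ vertices. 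A routine comparison shows that the globally smallest maximal clique is then a star and that no top attains this minimal size, so once more $\phi$ cannot exchange the parts. In either range we conclude that $\phi$ maps stars of $\mathcal C(n,k)_q$ to stars.

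It remains to single out the maximal stars inside the class of all stars. A maximal star equals a full star of $\mathcal G_k(V)$ and so has $[n-k+1]_q$ vertices, while by Lemma \ref{lemma-star1} every non-maximal star $[S\rangle^c_k$ is a proper subset of a full star and hence strictly smaller; thus the maximal stars are exactly the stars of maximal cardinality $[n-k+1]_q$. Since $\phi$ preserves both the class of stars and the number of vertices, it maps maximal stars to maximal stars. I expect the main obstacle to be the no-swap step for $n<2k$, where the clean size separation of $n\ge 2k$ fails; the smallest-clique invariant repairs it but needs the explicit inequalities above and, for $q=2$, separate care, since Proposition \ref{prop-star2} shows that some $[S\rangle^c_k$ then fail to be maximal cliques, which alters the set of available star sizes.
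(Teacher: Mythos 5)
Your argument for the range $n\ge 2k$ is exactly the paper's: a maximal star has $[n-k+1]_q$ vertices, every top has at most $[k+1]_q-k-1<[k+1]_q\le[n-k+1]_q$, and every non-maximal star is a proper subset of a full star of ${\mathcal G}_k(V)$, hence has fewer than $[n-k+1]_q$ elements; cardinality alone finishes that case. The gap is in the range $n<2k$. There you abandon the direct cardinality comparison and instead invoke an all-or-nothing dichotomy for the auxiliary bipartite graph ${\mathcal M}$: once one star (the globally smallest maximal clique) cannot go to a top, no star can. That dichotomy needs ${\mathcal M}$ to be connected, and your connectedness argument fails: you claim that any star $[S\rangle^{c}_{k}$ and any top $\langle U]^{c}_{k}$ through a common vertex meet in the full line $[S,U]_{k}$ and are therefore adjacent in ${\mathcal M}$. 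This is true when $S$ is non-degenerate (as you note earlier), but for a non-maximal star with $S\subset C_{i}$ the line $[S,U]_{k}$ contains the degenerate subspaces $U\cap C_{i}$, so $[S,U]_{k}\cap{\mathcal C}(n,k)_q$ has as few as $q+1-c(S)$ elements, which can equal $1$. Hence a non-maximal star need not be adjacent in ${\mathcal M}$ to any top through its vertices, connectedness of ${\mathcal M}$ is not established, and the part-preserving/part-swapping dichotomy is not available. Without it, knowing that the smallest maximal clique is a star says nothing about where the \emph{maximal} stars go. (You also leave the ``routine comparison'' and the $q=2$ subcase undone, but those are secondary to the structural gap.)

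The repair is the direct estimate you skipped: for $2k>n$ every top satisfies
$$[k+1]_q-l-[n-k+1]_q\ \ge\ \frac{q^{n-k+1}(q^{2k-n}-1)}{q-1}-n\ \ge\ q^{k}-n\ \ge\ 2k-n\ >\ 0,$$
so every top is strictly \emph{larger} than a maximal star, and the cardinality argument closes this case exactly as it did for $n\ge 2k$ --- no auxiliary graph, no connectedness, and no separate treatment of $q=2$ is needed. Your final step (maximal stars are precisely the stars of maximal size $[n-k+1]_q$, hence preserved) is correct once the star-versus-top separation is in place.
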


\begin{proof}
Every maximal star of ${\mathcal C}(n,k)_q$ contains precisely $[n-k+1]_q$ elements
and, by Lemma \ref{lemma-top1}, a top of ${\mathcal C}(n,k)_q$ contains $[k+1]_q-l$ elements,
where $l\in \{k+1,\dots,n\}$.
In the case when $2k\le n$, we have 
$$[n-k+1]_{q}\ge [k+1]_{q}$$
and the number of elements in a maximal star is greater 
than the number of elements in a top of ${\mathcal C}(n,k)_q$.
If $2k>n$, then
$$[k+1]_q-l-[n-k+1]_q\geq \frac{q^{k+1}-q^{n-k+1}}{q-1}-n=\frac{q^{n-k+1}(q^{2k-n}-1)}{q-1}-n\geq $$
$$\geq q^{n-k+1}q^{2k-n-1}-n=q^k-n\geq 2k-n>0,$$
i.e. the number of elements in any top of ${\mathcal C}(n,k)_q$ is greater than the number of elements in a maximal star.
Therefore, an automorphism of $\Gamma(n,k)_{q}$ cannot transfer maximal stars to tops of ${\mathcal C}(n,k)_q$.
Since the number of elements in maximal stars of ${\mathcal C}(n,k)_q$ is greater than 
the numbers of elements in non-maximal stars, maximal stars go to maximal stars.
\end{proof}

\begin{lemma}\label{lemma-clique2}
Every automorphism of $\Gamma(n,k)_{q}$ transfers stars of ${\mathcal C}(n,k)_q$ to stars.
\end{lemma}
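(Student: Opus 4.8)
The plan is to isolate a property of maximal cliques that separates stars from tops and is preserved by every automorphism $f$ of $\Gamma(n,k)_q$. By Lemma~\ref{lemma-clique1}, $f$ permutes the maximal stars of $\mathcal{C}(n,k)_q$ bijectively, and of course $f$ preserves cardinalities of cliques together with the relation $|A\cap B|=|f(A)\cap f(B)|$. The separating property I would use is: \emph{$A$ meets some maximal star in exactly one vertex}. Because $f$ maps the family of maximal stars onto itself, this property is $f$-invariant.

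First I would show that no top has this property. Let $\langle U]^{c}_{k}$ be a top and $[S'\rangle_{k}$ a maximal star, so that $S'\in\mathcal{C}(n,k-1)_q$. If the two meet, then $S'\subset U$; and since $S'$ is non-degenerate, every $k$-code containing $S'$ is again non-degenerate, so the whole line $[S',U]_{k}$ lies in $\mathcal{C}(n,k)_q$ and coincides with the intersection. As each line of $\mathcal{G}_{k}(V)$ has $q+1$ elements, a top meets every maximal star in $0$ or $q+1\ge 3$ vertices, never in exactly one. By contrast, two distinct stars of $\mathcal{G}_{k}(V)$ meet in at most one vertex, so any star meets each maximal star in at most one vertex; what remains is to see that every star actually realizes a one-vertex meeting with at least one maximal star.

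Maximal stars are already handled by Lemma~\ref{lemma-clique1}, so the point is the non-maximal case. Given a non-maximal star $[S\rangle^{c}_{k}$ with $S$ degenerate, it suffices to produce one of its vertices $X$ (a non-degenerate $k$-code containing $S$) that in turn contains a non-degenerate $(k-1)$-dimensional subspace $S'\ne S$: then $S+S'$ has dimension $k$ and equals $X$, so $X$ is the unique $k$-subspace containing both $S$ and $S'$, and the maximal star $[S'\rangle_{k}$ meets $[S\rangle^{c}_{k}$ precisely in $X$. This existence statement is the main obstacle. It can fail only if every vertex of the star is a code all of whose $(k-1)$-dimensional subspaces are degenerate (the phenomenon exhibited by the $[7,3]_2$ code earlier). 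I would rule this out by size: a non-maximal star is fairly large (Lemma~\ref{lemma-star1} bounds its number of vertices from below), too large to consist entirely of such exceptional codes. The tightest instances, together with the residual comparison of the smallest stars against the smallest tops through Lemmas~\ref{lemma-star1} and~\ref{lemma-top1}, are cleanest to settle after the reduction to $k=2$ made below.

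Combining these steps, the displayed property is $f$-invariant, fails for every top, and holds for every star; hence $f$ cannot send a star to a top. Since $f$ sends maximal cliques to maximal cliques and the only maximal cliques are stars and tops, it must send stars to stars.
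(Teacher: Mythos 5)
Your overall strategy is the same as the paper's: the separating invariant is ``meets some maximal star in exactly one vertex,'' and your argument that every top fails it (a maximal star $[S'\rangle_k$ with $S'$ non-degenerate meets $\langle U]^{c}_{k}$ either trivially or in the full line $[S',U]_k$, hence in $q+1\ge 3$ vertices) is exactly the paper's contradiction. The part you correctly flag as ``the main obstacle'' --- that every non-maximal star $[S\rangle^{c}_{k}$ actually meets some maximal star --- is asserted in the paper without further comment, so you are not behind the paper there; but the repair you sketch does not work as described. The cardinality of $[S\rangle^{c}_{k}$ from Lemma~\ref{lemma-star1} counts $k$-subspaces containing $S$ and says nothing about whether any of them contains a non-degenerate $(k-1)$-subspace: a priori every vertex of a large star could still be ``exceptional'' in the sense of the $[7,3]_2$ example, so ``too large to consist entirely of such codes'' is a non sequitur without a count of the exceptional codes, which you do not supply. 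Deferring the issue to ``after the reduction to $k=2$'' is legitimate for the theorem (the reduction uses only Lemma~\ref{lemma-clique1}, and for $k=2$ the existence is trivial since any non-degenerate point $S'$ gives the non-degenerate plane $S+S'$), but it leaves the lemma as stated, for general $k$, unproved.

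The missing step has a short direct proof that you should substitute for the size heuristic: what is needed is a non-degenerate $(k-1)$-subspace $S'$ adjacent to $S$ in $\Gamma_{k-1}(V)$, since then $S+S'$ is $k$-dimensional, contains the non-degenerate $S'$, hence lies in ${\mathcal C}(n,k)_q$, and is the unique common neighbour, so $[S'\rangle_k\cap[S\rangle^{c}_{k}=\{S+S'\}$. To get such an $S'$, pick any $(k-2)$-dimensional $T\subset S$ and extend it to a non-degenerate $(k-1)$-subspace: the count of Lemma~\ref{lemma-star1}, applied one dimension down, gives $(q-1)^{c(T)-1}q^{\,n-k-c(T)+2}\ge (q-1)^{n-k+1}\ge 1$ choices (using $c(T)\le n-k+2$), and any such $S'$ differs from the degenerate $S$ and satisfies $\dim(S\cap S')=k-2$. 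With that inserted, your proof is complete and coincides with the paper's.
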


\begin{proof}
Let $f$ be an automorphism of $\Gamma(n,k)_{q}$.
Suppose that $[S\rangle^{c}_{k}$ is a non-maximal star ($S$ does not belong to ${\mathcal C}(n,k-1)_q$)
such that $f([S\rangle^{c}_{k})$ is a certain top $\langle U]^{c}_{k}$.
There exists $S'\in {\mathcal C}(n,k-1)_{q}$ such that the star $[S\rangle^{c}_{k}$ and the maximal star $[S'\rangle_{k}$
have a non-empty intersection. This intersection consists of the $k$-dimensional subspace $S+S'$.
Hence the maximal star
$$f([S'\rangle_{k})=[S''\rangle_{k}$$
intersects the top $\langle U]^{c}_{k}$ precisely in one element.
Then $S''$ is contained in $U$ and every element of the line $[S'',U]_{k}$ belongs to ${\mathcal C}(n,k)_q$.
So, the intersection of $[S''\rangle_{k}$ and $\langle U]^{c}_{k}$ is the line $[S'',U]_{k}$
and we get a contradiction.
\end{proof}

Suppose that $f$ is an automorphism of $\Gamma(n,k)_{q}$ and $k\ge 3$.
By Lemma \ref{lemma-clique1}, $f$ sends maximal stars to maximal stars and the same holds for the inverse automorphism $f^{-1}$.
Therefore, $f$ induces a bijective transformation $f_{k-1}$ of ${\mathcal C}(n,k-1)_q$
such that 
$$
f([S\rangle_{k})=[f_{k-1}(S)\rangle_{k}
$$
for every $S\in {\mathcal C}(n,k-1)_q$.
Then 
$$f_{k-1}(\langle U]^{c}_{k-1})=\langle f(U)]^{c}_{k-1}$$
for every $U\in {\mathcal C}(n,k)_q$. 
This guarantees that $f_{k-1}$ is an automorphism of $\Gamma(n,k-1)_q$
(two distinct elements of ${\mathcal C}(n,k-1)_q$ are adjacent vertices of $\Gamma(n,k-1)_{q}$ if and only if 
there is an element of ${\mathcal C}(n,k)_q$ containing them).

We apply the above arguments to $f_{k-1}$ if $k-1\ge 3$.
Step by step, we construct a sequence $f_{k-1},\dots, f_{2}$, 
where every $f_{i}$ is an automorphism of $\Gamma(n,i)_q$.
As above, we have
$$f_{i}(\langle X]^{c}_{i})=\langle f_{i+1}(X)]^{c}_{i}$$
for every $X\in{\mathcal C}(n,i+1)_q$.
This implies that
$$f_{2}(\langle X]^{c}_{2})=\langle f(X)]^{c}_{2}$$
for every $X\in {\mathcal C}(n,k)_q$.
If $f_{2}$ is induced by a semilinear automorphism $l$ of $V$, then
$$f_{2}(\langle X]^{c}_{2})=\langle l(X)]^{c}_{2}$$
for every $X\in {\mathcal C}(n,k)_q$ which means that $f(X)=l(X)$, i.e.
$f$ is induced by $l$.
So, we need to prove Theorem \ref{theorem} for the case when $k=2$.

\subsection{Proof of Theorem \ref{theorem} for $q\ge 3$ and $k=2$}
Let $f$ be an automorphism of $\Gamma(n,2)_{q}$ and $q\ge 3$.
By Lemma \ref{lemma-clique2}, $f$ transfers every star of ${\mathcal C}(n,2)_q$ to a star and 
the same holds for the inverse automorphism $f^{-1}$.
Proposition \ref{prop-star1} shows that $f$ induces a bijective transformation $g$ of ${\mathcal G}_{1}(V)$
such that 
$$f([P\rangle^{c}_{2})=[g(P)\rangle^{c}_{2}$$
for every $P\in {\mathcal G}_{1}(V)$.
Then 
$$g(\langle S]_{1})=\langle f(S)]_{1}$$
for every $S\in {\mathcal C}(n,2)_q$.
In other words, $g$ sends every line of the projective space $\Pi_{V}$ defined by an element of ${\mathcal C}(n,2)_q$ to a line.
We need to show that $g$ transfers every line of $\Pi_{V}$ to a subset of a line.
Then the Fundamental Theorem of Projective Geometry implies that $g$ is induced by a semilinear automorphism of $V$.
As in the previous subsection, we establish that $f$ is induced by the same semilinear automorphism.

It follows from Lemma \ref{lemma-star1} that 
the size of a star $[P\rangle^{c}_{2}$ depends only on the number $c(P)$, i.e.
the number of coordinate hyperplanes containing $P$.
Therefore, $g$ preserves this number, i.e.
$$c(g(P))=c(P)$$
for every $P\in {\mathcal G}_{1}(V)$.
In particular, $g$ transfers every $\langle e_{i}\rangle$ to $\langle e_{\delta(i)}\rangle$,
where $\delta$ is a permutation on $[n]$. 

\begin{lemma}\label{lemma-case3}
If $P\in {\mathcal G}_{1}(V)$ is contained in $C_{i}$, then $g(P)$ is contained in $C_{\delta(i)}$. 
\end{lemma}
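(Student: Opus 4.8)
The plan is to prove the statement first for points $P$ with $c(P)=1$ and then to reduce the general case to this one. Write $Z(P)$ for the set of indices $j$ with $P\subset C_j$, so that $c(P)=|Z(P)|$ and $g$ preserves the cardinality of this set. Suppose $Z(P)=\{i\}$; then $P=\langle v\rangle$ with $v_j\ne 0$ for all $j\ne i$, so $\langle P,\langle e_i\rangle\rangle=\langle v,e_i\rangle$ is a non-degenerate code, on which $\langle e_i\rangle$ has $c$-value $n-1$, the point $P$ has $c$-value $1$, and the remaining $q-1$ points have $c$-value $0$. I would first record that on any non-degenerate line through a basis point $\langle e_j\rangle$ each remaining point $Q$ satisfies $Z(Q)\subseteq\{j\}$: since $\langle e_j\rangle\in C_m$ for every $m\ne j$ and such a line meets $C_m$ in a single point, that point must be $\langle e_j\rangle$, so no other point of the line lies in any $C_m$ with $m\ne j$. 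Applying $g$, the line $\langle v,e_i\rangle$ is carried to a non-degenerate line through $\langle e_{\delta(i)}\rangle$; as $g$ preserves $c$-values, $g(P)$ is the unique $c$-value $1$ point of the image line, and by the preceding observation its zero-support is $\{\delta(i)\}$, i.e. $g(P)\subset C_{\delta(i)}$.

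For arbitrary $P=\langle v\rangle$ with $i\in Z(P)$, the idea is to detect, for each index $m\notin Z(P)$, a non-degenerate line through $P$ whose intersection with $C_m$ is a point of $c$-value $1$. Given such $m$, choose $u$ with $u_m=0$ and $u_{m'}\ne 0$ for all $m'\ne m$; then $\langle u\rangle$ has zero-support exactly $\{m\}$, and $\langle v,u\rangle$ is non-degenerate because in every coordinate at least one of $u,v$ is non-zero (using $v_m\ne 0$, as $m\notin Z(P)$). Every point $\langle v+au\rangle$ of this line has $m$-th coordinate $v_m\ne 0$, so $Q_m=\langle u\rangle$ is the unique point of the line contained in $C_m$. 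By the case already treated, $g(Q_m)\subset C_{\delta(m)}$, and since a non-degenerate line meets $C_{\delta(m)}$ in exactly one point, $g(Q_m)$ is that point. As $g(P)$ is a different point of the same image line, it cannot also lie in $C_{\delta(m)}$; thus $\delta(m)\notin Z(g(P))$. Letting $m$ range over all indices outside $Z(P)$ yields $Z(g(P))\subseteq\delta(Z(P))$, and because $g$ preserves $c$ we have $|Z(g(P))|=|\delta(Z(P))|$, whence $Z(g(P))=\delta(Z(P))$. In particular $\delta(i)\in Z(g(P))$, which is exactly the assertion $g(P)\subset C_{\delta(i)}$.

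The main obstacle is precisely that for $|Z(P)|\ge 2$ the natural line $\langle P,\langle e_i\rangle\rangle$ is degenerate, so $g$ need not preserve it and the coordinate $i$ cannot be read directly off the basis point. The device that circumvents this is to route the coordinate information through the singleton-support points $Q_m$, for which the base case already determines the image hyperplane, and then to invoke the elementary fact that a non-degenerate line meets each coordinate hyperplane in a single point in order to expel $g(P)$ from $C_{\delta(m)}$ for every $m\notin Z(P)$. Accordingly, the only step requiring genuine care is the construction of a suitable vector $u$, and hence of the line $\langle v,u\rangle$ with the required unique intersection with $C_m$, for each $m$ outside $Z(P)$; once this is in place the cardinality count forces the equality $Z(g(P))=\delta(Z(P))$ and finishes the argument.
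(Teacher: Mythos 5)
Your proof is correct. It shares with the paper's argument the key preliminary mechanism: a point $Q$ with $c(Q)=1$ lying on a non-degenerate plane through a basis point $\langle e_j\rangle$ has its image pinned down, because such a plane meets each $C_m$ with $m\ne j$ only in $\langle e_j\rangle$ (the paper phrases this as: $f(\langle e_{i}\rangle+Q)=\langle e_{\delta(i)}\rangle+g(Q)$ is non-degenerate and $c(g(Q))=1$, which forces the unique zero coordinate of a spanning vector of $g(Q)$ to be the $\delta(i)$-th). After that the two arguments genuinely diverge. The paper works \emph{inside} $C_i$: it takes such a $Q\subset C_{i}$, observes that $[P\rangle^{c}_{2}$ and $[Q\rangle^{c}_{2}$ are disjoint because $P+Q\subset C_{i}$ is degenerate, and derives a contradiction from the assumption $g(P)\not\subset C_{\delta(i)}$, since that assumption makes $g(P)+g(Q)$ non-degenerate and the image stars would then meet. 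You work \emph{outside}: for each $m\notin Z(P)$ you exhibit a non-degenerate plane spanned by $P$ and a singleton-support point $Q_{m}\subset C_{m}$, use the fact that its image meets $C_{\delta(m)}$ in exactly one point, necessarily $g(Q_{m})$, to expel $g(P)$ from $C_{\delta(m)}$, and then convert this complementary information into the positive statement via the cardinality identity $c(g(P))=c(P)$. Your route is slightly longer for the stated claim but avoids the star-disjointness argument in favour of the elementary fact that a non-degenerate plane meets each coordinate hyperplane in a single point, and it delivers the sharper conclusion $Z(g(P))=\delta(Z(P))$ outright.
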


\begin{proof}
We take any $1$-dimensional subspace $Q$ containing a vector $\sum_{j\ne i}a_{j}e_{j}$ such that each of the $n-1$ scalars $a_{j}$ is non-zero.
Then $$S=\langle e_{i}\rangle +Q\in {\mathcal C}(n,2)_q$$ 
and we have 
$$f(S)=\langle e_{\sigma(i)}\rangle +g(Q).$$
The equality
$$c(g(Q))=c(Q)=n-1$$
implies that $g(Q)$ is spanned by a vector $\sum_{j\ne \sigma(i)}b_{j}e_{j}$, where each of the $n-1$ scalars $b_{j}$ is non-zero.

If $g(P)$ is not contained in $C_{\sigma(i)}$, then $g(P)+g(Q)$ belongs to ${\mathcal C}(n,2)_q$
and the stars $[g(P)\rangle^{c}_{2}$ and $[g(Q)\rangle^{c}_{2}$ have a non-empty intersection.
On the other hand, $P$ and $Q$ both are contained in $C_{i}$ and
the stars $[P\rangle^{c}_{2}$ and $[Q\rangle^{c}_{2}$ are disjoint.
\end{proof}

Let $S$ be a $2$-dimensional subspace which does not belong to ${\mathcal C}(n,2)_q$. 
Then it is contained in a certain $C_{i}$.
Show that $g$ sends the line $\langle S]_1$ to a subset of a line.

Let us take two distinct $1$-dimensional subspaces $P_{1},P_{2}\subset S$. 
We state that there exists $Q\in {\mathcal C}(n,1)_{q}$ such that the $2$-dimensional subspace $Q+P_{1}$
contains a certain $Q'\in {\mathcal C}(n,1)_{q}$ distinct from $Q$. 
Let $x=(x_{1},\dots,x_{n})$ be a non-zero vector of $P_1$.
Since $q\ge 3$, there exists a vector $y=(y_{1},\dots,y_{n})$, where every $y_{i}$ is non-zero and distinct from $-x_{i}$.
The $1$-dimensional subspaces 
$$Q=\langle y \rangle\;\mbox{ and }\;Q'=\langle x+y\rangle$$
are as required.
It is clear that 
$$Q+P_{1},\; Q+P_{2},\;Q'+P_{2}$$
form a clique of $\Gamma(n,2)_q$ which is not contained in a star.
Therefore, $\langle S+Q]^{c}_{2}$
is a top of ${\mathcal C}(n,2)_{q}$. 
This top intersects the maximal star $[Q\rangle_2$ in the line $[Q,S+Q]_{2}$.
Since $f([Q\rangle_2)$ is a maximal star and the intersection of two distinct stars contains at most one element, 
$f$ transfers $\langle S+Q]^{c}_{2}$ to a certain top $\langle U]^{c}_{2}$.

Let $P$ be a $1$-dimensional subspace of $S$.
Then $P+Q$ is an element of $\langle S+Q]^{c}_{2}$ and $f(P+Q)$ belongs to $\langle U]^{c}_{2}$.
We get the inclusion $g(P)\subset U$.
On the other hand, Lemma \ref{lemma-case3} states that $g(P)$ is contained in $C_{\delta(i)}$.
Therefore, $g(P)$ is in the $2$-dimensional subspace $S'=U\cap C_{\sigma(i)}$.
So, $g$ transfers the line $\langle S]_1$ to a subset of the line $\langle S']_1$.

By the Fundamental Theorem of Projective Geometry, $g$ is induced by a semilinear automorphism of $V$.
Since it sends every $\langle e_i \rangle$ to $\langle e_{\sigma(i)} \rangle$, this semilinear automorphism is monomial.

\subsection{Proof of Theorem \ref{theorem} for $q=2$ and $k=2$}
Let $f$ be an automorphism of $\Gamma(n,2)_2$ and $n\ge 4$.

For every subset $I=\{i_{1},\dots,i_{k}\}\subset [n]$
we define 
$$P_{I}:=\langle e_{i_{1}}+\dots+e_{i_{k}} \rangle .$$ 
Since $q=2$, each element of ${\mathcal G}_{1}(V)$ is a certain $P_{I}$.
By Proposition \ref{prop-star2}, the subset $[P_{I}\rangle^{c}_{2}$ is a star of ${\mathcal C}(n,2)_2$ if and only if $|I|\ge 3$.
We will write ${\mathcal X}$ for the subset of ${\mathcal G}_{1}(V)$ consisting of all $P_{I}$ satisfying this condition.
It follows from Lemma \ref{lemma-clique2} that $f$ induces a bijective transformation $g$ of ${\mathcal X}$ such that
$P_{I}\in {\mathcal X}$ is contained in $S\in {\mathcal C}(n,2)_2$ if and only if $g(P_{I})$ is contained in $f(S)$.

By Lemma \ref{lemma-star1}, two stars $[P_{I}\rangle^{c}_{2}$ and $[P_{J}\rangle^{c}_{2}$ have the same number of elements
if and only if $|I|=|J|$.
Hence, the equality $g(P_{I})=P_{J}$ implies that $|I|=|J|$. 
In particular, $g$ lives fixed $P_{[n]}$ (the $1$-dimensional subspace containing $e_{1}+\dots+e_{n}$).
Also, for every $i\in [n]$ we have 
$$g(P_{[n]\setminus \{i\}})=P_{[n]\setminus \{\delta(i)\}},$$
where $\delta$ is a permutation on the set $[n]$.
Therefore, $g$ transfers every $P_{I}\in {\mathcal X}$ to $P_{\sigma(I)}$ if $n=4$. We want to show that the same holds for $n>4$.

In this case, for any distinct $i,j\in [n]$ the set $[P_{[n]\setminus\{i,j\}}\rangle^{c}_{2}$ is a star of ${\mathcal C}(n,2)_2$.
This star has a non-empty intersection with the star $[P_{[n]\setminus\{t\}}\rangle^{c}_2$ only in the case when $t\ne i,j$.
This implies that
$$g(P_{[n]\setminus\{i,j\}})=P_{[n]\setminus\{\delta(i),\delta(j)\}}.$$
Similarly, we establish that $g$ sends every $P_{I}\in {\mathcal X}$ to $P_{\sigma(I)}$.

Let $l$ be the linear automorphism of $V$ transferring every $x_{i}$ to $x_{\sigma(i)}$.
The compo\-sition $l^{-1} f$ is an automorphism of $\Gamma(n,2)_2$ and the associated transformation of ${\mathcal X}$ is identity. 
In other words, $l^{-1}f$ leaves fixed every star of ${\mathcal C}(n,2)_2$.
In the case when $n\ge 5$, 
every element of ${\mathcal C}(n,2)_2$ is contained in at least two distinct stars.
This means that $l^{-1}f$ is identity, i.e. $f$ is induced by $l$.

Suppose that $n=4$. 
Then $S\in {\mathcal C}(4,2)_2$ is contained in precisely one star if and only if
$S=P_{I}+P_{J}$, where $I$ and $J$ are $2$-element subsets and $I\cup J=[4]$.
The remaining elements of ${\mathcal C}(4,2)_2$ are preserved by $l^{-1}f$.
Consider, for example, $$S=P_{\{1,2\}}+P_{\{3,4\}}.$$ 
It intersects 
$$S'=P_{\{1,3,4\}}+P_{\{2,3,4\}}$$ precisely in $P_{\{1,2\}}$.
Then $l^{-1}f(S)$ intersects $l^{-1}f(S')=S'$ in a certain $1$-dimensional subspace.
Since $l^{-1}f$ transfers every star of ${\mathcal C}(4,2)_2$ to itself,
this intersection cannot be $P_{\{1,3,4\}}$ or $P_{\{2,3,4\}}$; hence it is $P_{\{1,2\}}$.
Also, $S$ contains $P_{[4]}$ and the same holds for $l^{-1}f(S)$.
So, $l^{-1}f(S)$ contains both $P_{\{1,2\}}$ and $P_{[4]}$ which implies that it coincides with $S$.
The same arguments show that $l^{-1}f$ leaves fixed every $P_{I}+P_{J}$, where $I$ and $J$ are $2$-element subsets and $I\cup J=[4]$.


\begin{thebibliography}{99}
\bibitem{Artin} E. Artin {\it Geometric Algebra}, Interscience Publisher, 1957. 

\bibitem{BCN}
A.E. Brouwer, A.M. Cohen, A. Neumaier, 
{\it Distance-regular graphs},
Ergebnisse der Mathematik und ihrer Grenzgebiete/Results in Mathematics and Related Areas 18, 
Springer, 1989.

\bibitem{Chow} W.L. Chow,
{\it On the geometry of algebraic homogeneous spaces}, 
Ann. of Math. 50(1949), 32--67.

\bibitem{Die}
J. Dieudonn\'e, {\em La g\'eom\'etrie des groupes classiques},
Springer, 1971.


\bibitem{KP} M. Kwiatkowski, M. Pankov,
{\it On the distance between linear codes}, Fitite Fields Appl. 39(2016), 251--263.

\bibitem{P1} M. Pankov, {\it Grassmannians of classical buildings},
World Scientific, 2010.

\bibitem{P2} M. Pankov, {\it Geometry of semilinear embeddings. 
Relations to graphs and codes},
World Scientific, 2015.

\bibitem{TVN} M. Tsfasman, S. Vl\v{a}du\c{t}, D. Nogin,
{\it Algebraic Geometry Codes. Basic notions},
Amer. Math. Soc., Providence, 2007.


\bibitem{Wan} Z. Wan, {\it Geometry of Matrices}, World Scientific, 1996.
\end{thebibliography}
\end{document}